\def\modd{{\textrm{mod-}}}
\def\k{{\mathbf k}}
\def\kQ{\k Q}
\def\X{{\mathbf X}}
\def\x{{\mathbf x}}
\def\uS{\underline \Sigma}
\def\uA{\mathcal A_{\uS}}
\def\uchi{{\mathcal X}_{\uS}}
\def\uf{\underline f}
\def\qpm{q^{\pm \frac 12}}
\def\qp{q^{\frac 12}}
\def\qm{q^{-\frac 12}}
\def\Z{{\mathbb{Z}}}
\def\fl{{\longrightarrow}\,}
\def\ens#1{\left\{ #1 \right\}}
\def\<{\left<}
\def\>{\right>}
\newtheorem{theorem}{Theorem}[section]
\newtheorem{prop}[theorem]{Proposition}
\newtheorem{lem}[theorem]{Lemma}
\newtheorem{corol}[theorem]{Corollary}
\theoremstyle{definition}
\newtheorem{defi}[theorem]{Definition}
\newtheorem{exmp}[theorem]{Example}
\title{Quantum frieze patterns in quantum cluster algebras of type $A$}
\author{Jean-Philippe Burelle and Gr\'egoire Dupont}
\email{
Jean-Philippe.Burelle@USherbrooke.ca
}
\email{
dupontg@math.jussieu.fr
}
\address{
Universit\'e de Sherbrooke, 2500 Boul. de l'universit\'e, J1K 2R1 Sherbrooke QC, Canada.
}
\address{
Institut de Math\'ematiques de Jussieu -- Paris Rive Gauche\\
Universit\'e Denis Diderot -- Paris 7\\
175 rue du chevaleret\\
75013 Paris, France.}
\begin{document}

\begin{abstract}
	We introduce a quantisation of the Coxeter-Conway frieze patterns and prove that they realise quantum cluster variables in quantum cluster algebras associated with linearly oriented Dynkin quivers of type $A$. As an application, we obtain the explicit polynomials arising from the lower bound phenomenon in these quantum cluster algebras. 
\end{abstract}

\maketitle

\section{Introduction, notations and background}
	\subsection{Introduction}
		Quantum cluster algebras were introduced by Berenstein and Zelevinsky as quantum deformations of cluster algebras \cite{BZ:quantum}. The aim of introducing quantum cluster algebras was to provide a \emph{combinatorial} model for studying dual canonical bases in quantum groups and several recent articles showed the relevance of these structures with respect to this goal, see for instance \cite{Lampe:Kronecker, Kimura:qcanonical, Lampe:typeA, GLS:qcluster, HL:qk0}. 

		The simplest (quantum or classical) cluster algebras are those associated to Dynkin quivers of type $A$ which, besides their simplicity, carry very nice combinatorial structures and play an important role with respect to certain quantum groups associated to $\mathfrak {sl}_{n+1}$, see for instance \cite{Lampe:typeA}. 

		One of the various combinatorial techniques for studying the classical cluster algebras of type $A$ are \emph{frieze patterns}, which first appeared in the early 70's in an article by Coxeter, see \cite{C71}. In the context of cluster algebras, frieze patterns first arose in \cite[Section 5]{CC} where it turned out that they mimic the mutations along sinks or sources in a cluster algebra of type $A$. More generally, as it was observed in \cite{ARS:frises}, friezes can be used in order to realise certain cluster variables in arbitrary acyclic cluster algebras. These friezes were then studied in numerous articles, see for instance \cite{Propp:frieze,Dupont:stabletubes,BM:friezesD,AD:algorithm,ADSS:strings}, and they found interesting applications, in particular with respect to positivity, see \cite{ARS:frises,ADSS:strings,Dupont:positiveregular}.

		The aim of this article is to define a quantum analogue of Coxeter's frieze patterns and to show that they play a similar role in the context of quantum cluster algebras of type $A$ as the (classical) frieze patterns do in the context of (classical) cluster algebras of type $A$.

		As an application of these quantum frieze patterns, we obtain the explicit polynomials arising from the lower bound phenomena in quantum cluster algebras associated with a linearly oriented quiver of type $A$. These polynomials are quantum deformations of the so-called \emph{generalised Chebyshev polynomials} which appeared in the context of classical cluster algebras in \cite{Dupont:stabletubes}, or in the context of $SL(2,\Z)$-tilings of the plane under the name of \emph{signed continuant polynomials} in \cite{BR:Slktilings}. These polynomials turn out to be completely different from another deformation of the generalised Chebyshev polynomials, which was introduced in \cite{Dupont:qChebyshev} under the name of \emph{quantized Chebyshev polynomials}. In order not to get confused by this unfortunate choice of terminology, we refer to the polynomials introduced in the present article as \emph{quantum signed continuant polynomials}.

	\subsection{Notations for quantum cluster algebras}
		We shall make free use of the standard notations and terminology from the theory of quantum cluster algebras. Our reference is \cite{BZ:quantum}.

		Throughout the article, $n$ is a positive integer, $Q$ is the quiver with vertices indexed by $Q_0=\ens{1, \ldots, n}$ and arrows $i \fl i+1$ for any $1 \leq i \leq n-1$ and $B =(b_{i,j})_{1 \leq i,j \leq n} \in M_n(\Z)$ is the incidence matrix of $Q$, that is the skew-symmetric matrix given by 
		$$b_{i,j} = \left\{\begin{array}{ll}
			1 & \text{if } j=i+1~;\\
			-1 & \text{if } j=i-1~;\\
			0 & \text{otherwise.}
		\end{array}\right.$$
		Since we want to embed $B$ into a quantum seed, we need it to be invertible and therefore we assume throughout the article that $n$ is an \emph{even} integer.

		We set $\Lambda=(B^t)^{-1} = (\Lambda_{i,j})_{1 \leq i,j \leq n} \in M_n(\Z)$ the matrix given by 
		$$\Lambda_{i,j} = \left\{\begin{array}{ll}
			1 & \text{if $i$ is odd, $j$ is even and } i<j~;\\
			-1 & \text{if $i$ is even, $j$ is odd and } i>j~;\\
			0 & \text{otherwise.}
		\end{array}\right.$$
		Note that we shall usually identify $\Lambda$ with the bilinear form over $\Z^n$ with matrix $\Lambda$ when expressed in the canonical basis $\ens{e_1, \ldots, e_n}$ of $\Z^n$.

		We fix an indeterminate $\nu$ over $\Z$ and we set $q=\nu^2$. Note that we usually write $\qp$ instead of $\nu$ and $\qm$ instead of $\nu^{-1}$. We denote by $\mathcal T(\Lambda)$ the \emph{based quantum torus}, that is the $\Z[\qpm]$-algebra with a distinguished $\Z[\qpm]$-basis $\ens{X^u \ | \ u \in \Z^n}$ and with multiplication
		$$X^uX^v = q^{\frac{\Lambda(u,v)}{2}} X^{u+v}.$$
		For any $1 \leq i \leq n$, we set $X_i = X^{e_i}$ and we finally set $\X = (X_1, \ldots, X_n)$. Therefore, $\Sigma = (B, \Lambda, \X)$ is a \emph{quantum seed} in the sense of \cite{BZ:quantum}. Note that, up to a scalar multiple, the above choice of $\Lambda$ is the unique one such that $(B,\Lambda)$ is a compatible pair. Since $\mathcal T(\Lambda)$ is an Ore domain, it is contained in its skew-field of fractions which we denote by $\mathcal F(\Lambda)$. 

		From now on, $\mathcal A_\Sigma$ denotes the \emph{quantum cluster algebra} associated with the quantum seed $\Sigma$. It is the $\Z[\qpm]$-subalgebra of $\mathcal F(\Lambda)$ generated by the so-called \emph{quantum cluster variables}, that is, the elements in $\mathcal F(\Lambda)$ which are obtained from the initial cluster $\X$ by a finite number of mutations.

	\subsection{Notations for additive categorifications}
		Even if our results can be stated in purely combinatorial terms, we found it more enlightening to sometimes refer to the classical background of additive categorifications of acyclic cluster algebras via cluster categories. We refer the reader to \cite{Keller:survey} for an overview of these categorifications but the results should nevertheless be readable independently on this background. 

		In this context, we shall use the following notations~: $\k$ is an algebraically closed field, $\kQ$ is the path algebra of $Q$ over $\k$ and $\modd\kQ$ is the category of finitely generated right $\k Q$-modules. The bounded derived category $D^b(\modd \kQ)$ of $\modd\kQ$ is a triangulated category with suspension $[1]$ and the composition $F=\tau^{-1}[1]$ of the inverse of the Auslander-Reiten translation with the shift functor is an autoequivalence of $D^b(\modd \kQ)$. The orbit category of $F$ in $D^b(\modd\kQ)$ is called the \emph{cluster category} of $Q$ and was first introduced in \cite{BMRRT} (an alternative definition was independently given in \cite{CCS1} for Dynkin quivers of type $A$). 

	\subsection{Organisation of the article}
		The article is organised as follows. Section \ref{section:qfriezes} introduces the notion of quantum frieze pattern. Section \ref{section:qcluster} proves that quantum cluster variables in a cluster algebra of type $A$ satisfy the quantum frieze relations. Finally, Section \ref{section:qCheb} exhibits the explicit polynomials arising from the lower bound phenomena in the quantum cluster algebras of type $A$.
		
\section{Quantum frieze patterns}\label{section:qfriezes}
	In this section, we introduce quantum frieze patterns which, in simple words, can be viewed as tilings of the plane by $2 \times 2$ matrices with quantum determinant 1 over the skew field $\mathcal F(\Lambda)$.

	We denote by $\Z Q$ the \emph{repetition quiver} of $Q$, that is, the quiver whose vertices are indexed by $Q_0 \times \Z$ and with arrows $(i,k) \fl (j,l)$ whenever $k=l$ and $i \fl j$ in $Q$ or $l=k+1$ and $j \fl i$ in $Q$. 
	\begin{exmp}
		The following figure depicts $\Z Q$ when $Q$ is a linearly oriented Dynkin quiver of type $A$ with four vertices.
		\begin{center}
			\begin{tikzpicture}[scale = .7]
				\foreach \x in {-1,0,...,5}
				{
					\foreach \y in {1,...,3}
					{
						\fill (2*\x+\y-1,\y-1) node {$(\y,\x)$};
						\draw[->] (2*\x+\y-1+.25,\y-1+.25) -- (2*\x+\y-1+.75,\y-1+.75);
						\draw[->] (2*\x+\y,\y-.25) -- (2*\x+\y+.5,\y-.75);
					}
					\foreach \y in {4}
					{
						\fill (2*\x+\y-1,\y-1) node {$(\y,\x)$};
					}
				}
				\foreach \y in {1,...,3}
				{
					\fill (2*5+\y+1,\y-1) node {$\cdots$};
				}
				\foreach \y in {1,...,3}
				{
					\fill (-4+\y,\y) node {$\cdots$};
					\draw[->] (-4+\y,\y-.25) -- (-4+\y+.5,\y-.75);
				}
			\end{tikzpicture}
		\end{center}
	\end{exmp}

	\begin{defi}
		A \emph{quantum frieze pattern on $\Z Q$} is a map $$f : Q_0 \times \Z \fl \mathcal F(\Lambda)$$ such that 
		$$f(i,j)f(i,j+1) - \qp f(i-1,j+1)f(i+1,j) = 1$$
		for any $(i,j) \in Q_0 \times \Z$ with the convention that $f(n+1,j)=1$ and $f(0,j)=1$ for any $j \in \Z$.
	\end{defi}

	If $a,b,c,d \in \mathcal F(\Lambda)$, we use the classical notations from quantum determinants~:
	$$\left|\begin{array}{cc}
		a & b \\ 
		c & d 
	\end{array}\right|_{\nu} = ad - \qp bc.$$
	Therefore, a quantum frieze pattern on $\Z Q$ is a map which satisfies the \emph{quantum unimodular rule}
	$$\left|\begin{array}{cc}
		f(i,j) & f(i-1,j+1) \\ 
		 f(i+1,j) & f(i,j+1) 
	\end{array}\right|_{\nu} = 1$$
	for any $(i,j) \in Q_0 \times \Z$ and can thus be viewed as a \emph{$SL_\nu(2,\mathcal F(\Lambda))$-tiling} of the plane where $SL_\nu(2,\mathcal F(\Lambda))$ denotes the set of $2 \times 2$ matrices over $\mathcal F(\Lambda)$ with quantum determinant 1.

	We recall that $\Z Q$ is isomorphic to the \emph{Auslander-Reiten quiver} of the category $D^b(\modd \kQ)$, see \cite{Happel:book}. The autoequivalence $F$ on $D^b(\modd \kQ)$ induces an automorphism $\phi$ of the quiver $\kQ$ sending $(i,j)$ to $(n-i+1,j+i+1)$. We denote by $\Gamma=\Z Q/\<\phi\>$ the quotient quiver, which is isomorphic to the Auslander-Reiten quiver of the cluster category of $Q$. Therefore, the set $\Gamma_0$ of vertices of $\Gamma$ is identified with a fundamental domain for the action of $\phi$ on the vertices of $\Z Q$ and we chose $\ens{(i,j) \ | \ j \geq 0, i+j \leq n+1}$ as such a fundamental domain.

	\begin{exmp}
		The following figure depicts $\Gamma$ when $Q$ is a linearly oriented quiver with four vertices. The grey zone corresponds to the choice of a certain fundamental domain for the action of the automorphism $\phi$.
		\begin{center}
			\begin{tikzpicture}[scale = .7]

				\fill (4,3.5) node [above] {$\Gamma$};
				\fill[gray!20] (-1.5,-.5) -- (2.5,3.5) -- (5.5,3.5) -- (9.5,-.5) -- cycle;

				\foreach \x in {-1,...,4}
				{
					\foreach \y in {1,...,3}
					{
						\draw[->] (2*\x+\y-1+.25,\y-1+.25) -- (2*\x+\y-1+.75,\y-1+.75);
						\draw[->] (2*\x+\y,\y-.25) -- (2*\x+\y+.5,\y-.75);
					}

				}
				\foreach \y in {1,...,3}
				{
					\fill (2*5+\y+1,\y-1) node {$\cdots$};
				}
				\foreach \y in {1,...,3}
				{
					\fill (-4+\y,\y) node {$\cdots$};
					\draw[->] (-4+\y,\y-.25) -- (-4+\y+.5,\y-.75);
				}

				\foreach \x in {0,1}
				{
					\foreach \y in {1,...,4}
					{
						\fill (2*\x+\y-1,\y-1) node {$(\y,\x)$};
					}
				}

				\foreach \x in {2}
				{
					\foreach \y in {1,...,3}
					{
						\fill (2*\x+\y-1,\y-1) node {$(\y,\x)$};
					}
				}

				\foreach \x in {3}
				{
					\foreach \y in {1,...,2}
					{
						\fill (2*\x+\y-1,\y-1) node {$(\y,\x)$};
					}
				}

				\foreach \x in {4}
				{
					\foreach \y in {1}
					{
						\fill (2*\x+\y-1,\y-1) node {$(\y,\x)$};
					}
				}

				\fill (7,3) node {$(1,0)$};
				\fill (8,2) node {$(2,0)$};
				\fill (9,1) node {$(3,0)$};
				\fill (10,0) node {$(4,0)$};

				\fill (9,3) node {$(1,1)$};
				\fill (10,2) node {$(2,1)$};
				\fill (11,1) node {$(3,1)$};

				\fill (11,3) node {$(1,2)$};
				\fill (12,2) node {$(2,2)$};

				\fill (1,3) node {$(1,4)$};
				\fill (0,2) node {$(2,4)$};
				\fill (-1,1) node {$(3,4)$};
				\fill (-2,0) node {$(4,4)$};

			\end{tikzpicture}
		\end{center}
	\end{exmp}

	\begin{defi}
		We say that a quantum frieze pattern on $\Z Q$ induces a \emph{quantum frieze pattern on $\Gamma$} if $f(\phi.(i,j)) = f(i,j)$ for any $(i,j) \in Q_0 \times \Z$.
	\end{defi}

	Quantum frieze patterns can easily be computed recursively with particular choices of values. For instance, we have the following lemmas which are proved with a straightforward induction~:
	\begin{lem}\label{lem:slice}
		A quantum frieze pattern on $\Z Q$ (or on $\Gamma$) is entirely determined by its values on the set $\ens{(i,0) \ | \ i \in Q_0}$. \hfill \qed
	\end{lem}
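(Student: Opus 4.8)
The plan is to use the quantum frieze relation to propagate the values from the slice $\ens{(i,0)\ |\ i\in Q_0}$ outwards, one step in the second coordinate at a time, and within each such slice one vertex at a time.

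First I would record the two recursions contained in the defining relation. Assuming $f(i,j)$ is invertible in the skew field $\mathcal F(\Lambda)$, the relation at $(i,j)$ can be solved for $f(i,j+1)$:
\[
f(i,j+1) = f(i,j)^{-1}\left(1 + \qp\, f(i-1,j+1)\, f(i+1,j)\right),
\]
while the relation at $(i,j-1)$ can be solved for $f(i,j-1)$:
\[
f(i,j-1) = \left(1 + \qp\, f(i-1,j)\, f(i+1,j-1)\right) f(i,j)^{-1}.
\]
The invertibility of the frieze values is the only point here that is not purely formal: it holds because $\mathcal F(\Lambda)$ is a skew field and the values of a quantum frieze pattern are easily seen to be nonzero, exactly as in the classical case --- and in any event it is clear for every frieze considered in this article.

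The induction itself is then straightforward. For the step from the slice at level $j$ to the slice at level $j+1$, assume the values $f(i,j)$ for $i\in Q_0$ are all known; the first recursion expresses $f(i,j+1)$ in terms of level $j$ together with the previously computed $f(i-1,j+1)$, so an induction on $i$ running from $1$ to $n$ determines the whole of level $j+1$, the base case $i=1$ being handled by the convention $f(0,j+1)=1$ and the case $i=n$ by $f(n+1,j)=1$. The step from level $j$ to level $j-1$ is entirely symmetric, using the second recursion and inducting on $i$ downwards from $n$ to $1$, the base case $i=n$ being handled by $f(n+1,j-1)=1$. An outer induction on $|j|$ then shows that the values on $\ens{(i,0)\ |\ i\in Q_0}$ determine $f$ everywhere on $Q_0\times\Z$, which proves the assertion for $\Z Q$.

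Finally, a quantum frieze pattern on $\Gamma$ is in particular a quantum frieze pattern on $\Z Q$, and the slice $\ens{(i,0)\ |\ i\in Q_0}$ is contained in the chosen fundamental domain $\ens{(i,j)\ |\ j\geq 0,\ i+j\leq n+1}$, so the statement for $\Gamma$ follows at once from that for $\Z Q$. I do not anticipate any real obstacle: the only things that require a little care are the bookkeeping of the nested induction (slice by slice, then vertex by vertex) and the invertibility remark made above.
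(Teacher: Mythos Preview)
Your argument is precisely the straightforward induction the paper has in mind; the paper itself offers no details beyond the phrase ``proved with a straightforward induction''. The one overstatement is that nonvanishing of the entries is \emph{not} automatic for an arbitrary quantum frieze pattern (and without it the slice need not determine the pattern), but your hedge that it holds for every frieze actually used in the article is exactly the right qualification and matches the paper's level of informality.
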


	\begin{lem}\label{lem:mouth}
		A quantum frieze pattern on $\Gamma$ is entirely determined by its values on the set $\ens{(1,j) \ | \ 0 \leq j \leq n}$. \hfill \qed
	\end{lem}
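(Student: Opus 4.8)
The plan is to run a row-by-row induction on the first coordinate $i$, using the quantum frieze relation to climb out of the mouth. Recall that the vertices of $\Gamma$ are identified with the fundamental domain $\ens{(i,j) \ | \ j \geq 0, \ i+j \leq n+1}$ and that its $i=1$ slice is exactly the mouth $\ens{(1,j) \ | \ 0 \leq j \leq n}$ on which $f$ is prescribed; since this set is a fundamental domain for $\phi$, it is enough (and equivalent to what we want, both on $\Gamma$ and on $\Z Q$) to show that these prescribed values determine $f(i,j)$ for every $(i,j)$ in it. One could alternatively first reduce to the column $\ens{(i,0) \ | \ i \in Q_0}$ via Lemma~\ref{lem:slice}, but the direct argument is just as short.

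Concretely, I would prove by induction on $i \geq 1$ that $f$ is determined on all vertices of the fundamental domain with first coordinate at most $i$; it is convenient to treat the constant value $f(0,\cdot)=1$ as a ``slice $0$'' that is always known, so that the case $i=1$ is precisely the hypothesis. For the inductive step, assuming the slices $i-1$ and $i$ are determined (for some $i \geq 1$), I would pick a vertex $(i+1,j)$ of the fundamental domain, i.e. with $0 \leq j \leq n-i$, and observe from the defining inequalities that $(i,j)$, $(i,j+1)$ and $(i-1,j+1)$ then also lie in the fundamental domain, with first coordinate $\leq i$, so their $f$-values are already known. Writing the frieze relation at $(i,j)$,
$$f(i,j)f(i,j+1) - \qp f(i-1,j+1)f(i+1,j) = 1,$$
and solving for the only unknown entry gives
$$f(i+1,j) = \qm\, f(i-1,j+1)^{-1}\bigl(f(i,j)f(i,j+1) - 1\bigr)$$
when $i \geq 2$, and simply $f(i+1,j) = \qm\bigl(f(1,j)f(1,j+1)-1\bigr)$ when $i=1$ (since then $f(i-1,j+1)=f(0,j+1)=1$). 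This determines the whole slice $i+1$ and closes the induction.

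I do not expect a genuine obstacle here — this is the ``straightforward induction'' announced just before the statement — and the only point deserving a comment is the inversion of $f(i-1,j+1)$ in the displayed recursion: it is legitimate because $\mathcal F(\Lambda)$ is a skew-field, so the recursion goes through as soon as that entry is nonzero, which is automatic in the situations of interest (for instance when $f$ takes values among quantum cluster variables, which are invertible in $\mathcal F(\Lambda)$); in the abstract statement one simply reads ``determined'' as ``two quantum frieze patterns on $\Gamma$ agreeing on the mouth coincide'', the recursion above exhibiting their common value at each vertex. Everything else reduces to routine bookkeeping with the inequalities $j \geq 0$ and $i+j \leq n+1$ that define the fundamental domain.
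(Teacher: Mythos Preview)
Your proposal is correct and is exactly the ``straightforward induction'' the paper alludes to; the paper gives no details beyond that phrase and the \qed, so there is nothing further to compare. Your remark on the invertibility of $f(i-1,j+1)$ is a fair caveat that the paper does not make explicit, and your reading of ``determined'' is the intended one in the applications.
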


\section{Quantum frieze patterns and quantum cluster variables}\label{section:qcluster}
	In this section, we prove that quantum friezes on $\Gamma$ realise quantum cluster variables in $\mathcal A_\Sigma$. 

	According to Lemma \ref{lem:slice}, there exists a unique quantum frieze pattern on $\Z Q$ such that $f(i,0)=X_i$ for any $i \in Q_0$. This quantum frieze pattern is referred to as the \emph{quantum frieze of variables} associated with $Q$.

	\begin{exmp}
		The following figure depicts the quantum frieze of variables associated with a linearly oriented quiver of Dynkin type $A$ with 2 vertices. 
		\begin{center}
			\begin{tikzpicture}[scale = .9]
				\fill (0,0) node {$X_1$};

				\draw[->] (.5,.5) -- (1.5,1.5);

				\fill (2,2) node {$X_2$};

				\draw[->] (2.5,1.5) -- (3.5,.5);

				\fill (4,0) node {$X_1^{-1}\left(1+\qp X_2 \right)$};

				\draw[->] (4.5,.5) -- (5.5,1.5);

				\fill (6,2) node {$X_2^{-1}X_1^{-1}\left(X_1+\qp+qX_2\right)$};

				\draw[->] (6.5,1.5) -- (7.5,.5);

				\fill (8,0) node {$(1+\qp X_1)X_2^{-1}$};

				\draw[->] (8.5,.5) -- (9.5,1.5);

				\fill (10,2) node {$X_1$};

				\draw[->] (10.5,1.5) -- (11.5,.5);

				\fill (12,0) node {$X_2$};
			\end{tikzpicture}
		\end{center}
	\end{exmp}

	\begin{theorem}\label{thm:bij}
		Let $f$ be the quantum frieze of variables associated with $Q$. Then $f$ induces a quantum frieze pattern on $\Gamma$ which realises a bijection from $\Gamma_0$ to the set of quantum cluster variables in $\mathcal A_\Sigma$.
	\end{theorem}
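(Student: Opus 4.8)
The plan is to reduce the quantum frieze relations to quantum exchange relations along a distinguished mutation sequence. Set $\underline\mu=(\mu_1,\mu_2,\ldots,\mu_n)$, read from left to right, and for $m\in\Z$ put $\Sigma^{(m)}=\underline\mu^m(\Sigma)$, using $\underline\mu^{-1}=(\mu_n,\ldots,\mu_1)$ for $m<0$. First I would record how $\underline\mu$ acts on the combinatorial data: within a round, at the moment $\mu_k$ is applied the vertex $k$ has become a source whose only arrows go to $k-1$ and $k+1$, so $\mu_k$ merely reverses those two arrows and creates none; after $\mu_1\cdots\mu_k$ the quiver is $1\to\cdots\to k\leftarrow(k+1)\to\cdots\to n$, and after one full round the original orientation of $Q$ is restored. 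Hence $\underline\mu$ fixes $B$; since $(B,\Lambda)$ is, up to a scalar, the only compatible pair and compatibility with a fixed matrix $D$ (here $D=I$) is preserved by mutation, $\underline\mu$ fixes $\Lambda$ too. Along the way one writes down the intermediate matrices $B^{(m,k)}$ and $\Lambda^{(m,k)}$ obtained after the first $k$ mutations of round $m$; these have a simple, uniform shape.

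The core step is to prove, by induction on the total number of mutations performed, that after the first $k$ mutations of round $m+1$ the cluster of the current seed is
\[
(f(1,m+1),\ldots,f(k,m+1),\,f(k+1,m),\ldots,f(n,m)).
\]
For the inductive step I apply $\mu_{k+1}$. Since $k+1$ is a source in $B^{(m,k)}$ with arrows only to $k$ and $k+2$, the Berenstein--Zelevinsky exchange rule produces a new variable $y$ with $f(k+1,m)\,y=1+q^{c}\,X^{e_k+e_{k+2}}$ in the current quantum torus, where the exponent $c$ is an explicit combination of entries of $\Lambda^{(m,k)}$; expanding $X^{e_k+e_{k+2}}$ in terms of the current cluster variables $f(k,m+1)$ and $f(k+2,m)$ and evaluating $c$, one checks that this relation is precisely
\[
f(k+1,m)\,y=1+\qp f(k,m+1)f(k+2,m),
\]
i.e.\ the defining relation of a quantum frieze pattern at $(k+1,m)$, the boundary cases $k+1\in\{1,n\}$ being covered by the conventions $f(0,\cdot)=f(n+1,\cdot)=1$. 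Hence $y=f(k+1,m+1)$, which closes the induction. I expect this $q$-power bookkeeping --- that the Berenstein--Zelevinsky quantisation of the exchange relation agrees exactly with the quantisation built into the definition of a quantum frieze pattern --- to be the main obstacle: it is the one place where the precise form of $\Lambda$ and the careful propagation of the matrices $\Lambda^{(m,k)}$ through a round are indispensable. Granting it, $(f(1,m),\ldots,f(n,m))$ is the cluster of $\Sigma^{(m)}$ for every $m\in\Z$, so every $f(i,m)$ with $i\in Q_0$ is a quantum cluster variable of $\mathcal A_\Sigma$.

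Next I would show that $f$ descends to $\Gamma$, that is $f\circ\phi=f$. The two outer values $f(i-1,j+1)$ and $f(i+1,j)$ occurring in the frieze relation at $(i,j)$ sit in slots $i-1$ and $i+1$ of the intermediate cluster reached after the first $i-1$ mutations of round $j+1$ (by the core step), so they $q$-commute; computing the relevant entry of $\Lambda^{(j,i-1)}$ shows the commutation scalar is $1$, i.e.\ they commute. Since $\phi$ is a quiver automorphism of $\Z Q$, the frieze relation for $f\circ\phi$ at $(i,j)$ becomes, after substituting the formula for $\phi$, the genuine frieze relation for $f$ at $\phi(i,j)$ except with its two outer factors transposed --- which, by the commutativity just noted, is the same relation; hence $f\circ\phi$ is again a quantum frieze pattern on $\Z Q$. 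It agrees with $f$ on the slice $\{(i,0):i\in Q_0\}$, the equality $f(n{+}1{-}i,\,i{+}1)=X_i$ being a finite verification via the recursion (or a consequence of the core step and the injectivity of the $\nu=1$ specialisation on quantum cluster variables), so Lemma~\ref{lem:slice} gives $f\circ\phi=f$. In particular $f$ is $(n+3)$-periodic in $j$, in accordance with $\phi^2(i,j)=(i,j+n+3)$. Thus $f$ induces a quantum frieze pattern on $\Gamma$.

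Finally, bijectivity. Counting the chosen fundamental domain gives $|\Gamma_0|=\tfrac{n(n+3)}{2}$, which equals the number of quantum cluster variables of $\mathcal A_\Sigma$ --- the same as for the classical cluster algebra, since the exchange graph is insensitive to quantisation \cite{BZ:quantum}. The induced map $\Gamma_0\to\mathcal A_\Sigma$ is injective, because its $\nu=1$ specialisation is the classical frieze of variables on $\Gamma_0$, which takes pairwise distinct values \cite{CC,ARS:frises}. By the core step its image consists of quantum cluster variables, and being a set of $\tfrac{n(n+3)}{2}$ distinct ones it must be all of them. Therefore $f$ induces a bijection from $\Gamma_0$ onto the set of quantum cluster variables of $\mathcal A_\Sigma$, as claimed.
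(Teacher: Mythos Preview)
Your core step---running the source sequence $\mu_1,\ldots,\mu_n$, tracking $B$ and $\Lambda$ through the round, and checking that the quantum exchange relation at step $k$ is literally the quantum frieze relation at $(k,m)$---is exactly what the paper does. The paper carries out the $q$-power bookkeeping you flag as the obstacle: it records that along this sequence each mutation only flips the sign of one row/column of $\Lambda$, so that $\Lambda^i_{i-1,i+1}=0$, $\Lambda^i_{i,i+1}=\Lambda_{i,i+1}$ and $\Lambda^i_{i,i-1}=-\Lambda_{i,i-1}$, which together give the exponent $\tfrac12$. Your formulation (all rounds $m$ at once) versus the paper's (one round, then repeat because $B^{n+1}=B$ and $\Lambda^{n+1}=\Lambda$) is only cosmetic.

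Where you diverge is in establishing $\phi$-periodicity. The paper does not argue that $f\circ\phi$ is again a quantum frieze pattern; instead it specialises at $\nu=1$, observes that $\sigma\circ f=\underline f$ is the classical frieze (known to be $\phi$-periodic and to biject $\Gamma_0$ with $\underline{\mathcal X}$), and invokes the Berenstein--Zelevinsky fact that $\sigma$ is a bijection between quantum and classical cluster variables. Periodicity and bijectivity then drop out simultaneously from a short diagram chase. Your route---prove the outer factors $f(i-1,j+1)$ and $f(i+1,j)$ commute, deduce $f\circ\phi$ satisfies the frieze rule, then match on a slice---is correct in outline (and your commutativity claim is indeed $\Lambda^{(j,i-1)}_{i-1,i+1}=0$, which holds because $\Lambda_{i-1,i+1}=0$ by parity and off-mutation entries of $\Lambda$ are unchanged under mutation). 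But to match on the slice you need $f(n{+}1{-}i,i{+}1)=X_i$, and for this you yourself fall back on the specialisation/injectivity argument. So your detour through commutativity and Lemma~\ref{lem:slice} is extra work for no gain: once you invoke the injectivity of $\sigma$ on quantum cluster variables, the paper's argument gives periodicity and bijectivity in one stroke, without ever needing to know that $f\circ\phi$ is a frieze pattern.
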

	\begin{proof}
		We denote by ${\mathcal X}_\Sigma$ the set of quantum cluster variables in $\mathcal A_\Sigma$.

		We first prove that $f(i,j) \in {\mathcal X}_\Sigma$ for any $(i,j) \in Q_0 \times \Z$. We set $\Sigma^1=(B,\Lambda,\X)$ and for any $1\leq k\leq n$, we set $\Sigma^{k+1}=(B^{k+1},\Lambda^{k+1},\X^{k+1})=\mu_k(\Sigma^{k})$. Let $X'_1, \ldots, X'_{k-1}$ such that $\X^k=(X'_1,X'_2,\cdots ,X'_{k-1},X_{k},\cdots ,X_n)$. Note that the $k$-th column of $B^k$ only contains non-negative numbers and that mutating $B^k$ in the direction $k$ only changes the sign of the $k$-th column. The same holds for $\Lambda^k$.

		We now prove by induction on $i$ that $X_i' = f(i,1)$ for any $1 \leq i \leq n$. Performing the above sequence of mutations, we get the quantum cluster variables~:
		\begin{align*}
		X_1' &= X^{-e_1} + X^{-e_1+e_2}\\
					&= X_1^{-1} + q^{1/2\Lambda_{1,2}}X_1^{-1}X_2\\
					&= X_1^{-1} + q^{1/2}X_1^{-1}X_2,\\
		\end{align*}
		so that 
		$$X_1X_1' = q^{1/2}X_2 + 1,$$
		that is,
		$$f(1,0)X_1' = q^{1/2}f(2,0)+1$$
		and thus $X_1'=f(1,1)$.

		We now fix $2 \leq i \leq n$. Then,
		\begin{align*}
		X_i' &= X^{-e_i} + X^{-e_i+e_{i-1}+e_{i+1}}\\
					&= X_i^{-1} + q^{(\Lambda^i_{i,i-1}+\Lambda^i_{i,i+1}-\Lambda^i_{i-1,i+1})/2}X_i^{-1}X_{i-1}X_{i+1}.\\
		\end{align*}
		We know that $\Lambda^i$ differs from $\Lambda$ only by sign changes. Therefore, $\Lambda^i_{i-1,i+1}=0$. Moreover, as we did not mutate along $i$ or $i+1$ yet, we have $\Lambda^i_{i,i+1}=\Lambda_{i,i+1}$. And as we performed a single mutation in the direction $i-1$, we have $\Lambda^i_{i,i-1}=-\Lambda_{i,i-1}$.
		Thus,
		\begin{align*}
		X_i' &= X_i^{-1} + q^{(-\Lambda_{i,i-1}+\Lambda^i_{i,i+1})/2}X_i^{-1}X_{i-1}X_{i+1}\\
			&= X_i^{-1} + q^{1/2}X_i^{-1}X_{i-1}X_{i+1},
		\end{align*}
		and thus
		$$X_iX_i'= q^{1/2}X'_{i-1}X_{i+1} + 1.$$
		Therefore, by induction hypothesis, we get
		$$f(i,0)X_i'= q^{1/2}f(i-1,1)f(i+1,0) + 1$$
		and thus $X_i' = f(i,1)$ for any $1 \leq i \leq n$.

		We now observe that $B^{n+1}=B$ and $\Lambda^{n+1}=\Lambda$, so that we can again perform the same sequence of mutations and, by the same argument, the newly obtained quantum cluster variables satisfy the quantum frieze relations. Therefore, $f(i,j) \in {\mathcal X}_\Sigma$ for any $(i,j) \in Q_0 \times \Z_{\geq 0}$. The same argument works ``backwards'' performing the sequence $\mu_1 \circ \cdots \circ \mu_n$ of mutations along the sinks and thus $f(i,j) \in {\mathcal X}_\Sigma$ for any $(i,j) \in Q_0 \times \Z$. 

		We now fix a $n$-tuple $\x=(x_1, \ldots, x_n)$ of indeterminates over $\Z$ and we denote by $\uS$ the (classical) seed $(B,\x)$. The corresponding (classical) cluster algebra is denoted by $\uA$ and the set of cluster variables in $\uA$ is denoted by $\uchi$. We denote by $\sigma$ the \emph{specialisation} map from $\mathcal F(\Lambda)$ to $\mathbb Q(x_1, \ldots, x_n)$, which is the $\Z$-algebra homomorphism sending $q$ to 1 and $X_i$ to $x_i$ for any $1 \leq i \leq n$. Finally, we denote by $\uf$ the classical frieze pattern associated with $Q$, that is, the map $\uf:Q_0 \times \Z \fl \mathbb Q(x_1, \ldots, x_n)$ such that $\uf(i,0)=x_i$ for any $1 \leq i \leq n$ and such that $$\uf(i,j)\uf(i,j+1) - \uf(i-1,j+1)\uf(i+1,j) = 1$$
		for any $(i,j) \in Q_0 \times \Z$ with the convention that $\uf(n+1,j)=1$ and $\uf(0,j)=1$ for any $j \in \Z$. It is well-known (see for instance \cite{ARS:frises}) that $\uf$ is $\phi$-periodic and that it induces a bijection between the vertices in $\Gamma_0$ and $\uchi$. 

		We now observe that $\uf(i,j) = \sigma(f(i,j))$ for any $(i,j) \in Q_0 \times \Z$. Since Berenstein and Zelevinsky proved in \cite{BZ:quantum} that $\sigma$ induces a bijection between ${\mathcal X}_\Sigma$ and $\uchi$, it follows that $f$ is also $\phi$-periodic and induces a quantum frieze pattern on $\Gamma$. Moreover, we have the commutative diagram 
		$$\xymatrix{
			& \Gamma_0 \ar[rd]^{\uf} \ar[dl]_{f} \\
			f(\Gamma_0) \ar[d]^{\iota} \ar[rr]^{\sigma} && \uf(\Gamma_0) \ar@{=}[d]\\
			{\mathcal X}_\Sigma \ar[rr]^{\sigma}_{\sim} && \uchi.
		}$$
		where $\iota$ is the inclusion given by the above discussion. As $\uf = \sigma \circ f$ is bijective, $\sigma : f(\Gamma_0) \rightarrow \uf(\Gamma_0)$ is surjective and $f$ is injective. And since $\sigma : {\mathcal X}_\Sigma \rightarrow \uchi$ is a bijection, we obtain
		\[ \# ({\mathcal X}_\Sigma) = \# (\uchi) = \# (\uf(\Gamma_0)) \leq \# (f(\Gamma_0)) \leq \# ({\mathcal X}_\Sigma) .\]
		Thus,
		\[ \# ({\mathcal X}_\Sigma) = \# (f(\Gamma_0))\]
		and therefore $f(\Gamma_0) = {\mathcal X}_\Sigma$, that is, $f$ induces a bijection between $\Gamma_0$ and ${\mathcal X}_\Sigma$.
	\end{proof}

\section{Lower bounds and quantum signed continuant polynomials}\label{section:qCheb}
	Lower bounds for classical cluster algebras were initially introduced in \cite{cluster3} and an analogue was defined in the quantum settings in \cite[Section 7]{BZ:quantum}. For any $1 \leq i \leq n$, we denote by $X_i'$ the quantum cluster variable obtained by mutating the initial cluster $\X$ in the direction $i$ (note that the notation $X_i'$ differs from the one used in the proof of Theorem \ref{thm:bij}). The lower bound of $\mathcal A_\Sigma$ is the $\Z[\qpm]$-subalgebra of $\mathcal F(\Lambda)$~:
	$$\mathcal L_\Sigma = \Z[\qpm][X_1, X_1', \ldots, X_n, X_n'].$$
	According to \cite[Theorem 7.5]{cluster3}, since $Q$ is acyclic, we have $\mathcal A_\Sigma = \mathcal L_\Sigma$ and therefore, every quantum cluster variable in $\mathcal A_\Sigma$ can be expressed as a polynomial in the quantum cluster variables $X_1, X_1', \ldots, X_n, X_n'$. Explicit formulae for these polynomials in type $A$ were for instance obtained in classical settings, see \cite{ST:unpunctured} or \cite{Dupont:stabletubes}. 

	We define a family of polynomials $\ens{P_{m,i}}_{m \geq 0}$ where, for any $1 \leq i \leq m$ and $m \geq 1$ such that $m+i-1 \leq n$, $P_{m,i}=P_m(X'_i,\cdots,X'_{i+m-1})$ is defined by induction, setting $P_{0,i}=1$, $P_{1,i}=X'_i$ and 
	\begin{equation}
		P_{m+1,i}=
		\begin{cases}
			P_{m,i}X'_{i+m} - q^{-\frac{1}{2}}P_{m-1,i} & \textrm{if } m \textrm{ is even}~;\\
			q^{-\frac{1}{2}}(P_{m,i} X'_{i+m} - P_{m-1,i}) & \textrm{if } m \textrm{ is odd.} \\
		\end{cases}\label{eq:Pndefn}
	\end{equation}
	Note that \eqref{eq:Pndefn} is a quantisation of the three-term-relation for the so-called \emph{generalised Chebyshev polynomials} or \emph{signed continuant polynomials}, see \cite[Lemma 3.2]{Dupont:stabletubes} and \cite[(37)]{BR:Slktilings}. 

	The main result of this section is the following~:
	\begin{theorem}\label{thm:qCheb}
		Let $f$ be the quantum frieze of variables associated with $Q$. Then for any $(i,j) \in \Gamma_0$, the quantum cluster variable in $\mathcal A_\Sigma$ corresponding to $(i,j)$ is 
		$$f(i,j) = 
			\begin{cases} 
				X_i & \text{if } j=0~;\\
				P_{i}(X_j', \ldots, X_{j+i-1}') & \text{ otherwise}.
			\end{cases}$$
	\end{theorem}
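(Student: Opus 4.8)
The plan is to argue by induction on $i$, for $i$ ranging over the values for which $(i,j)\in\Gamma_0$ with $j\geq 1$, that $f(i,j)=P_i(X'_j,\ldots,X'_{j+i-1})$. The case $j=0$ is nothing but the definition of the quantum frieze of variables, so there is nothing to prove there. For the base cases of the induction on $i$ I would first check that $f(1,j)=X'_j$ for all relevant $j$: by definition of $\Gamma$ and the frieze relation, $f(1,j)$ is the cluster variable obtained from $X_j$ by a single mutation in direction $j$, which is precisely $X'_j$ as defined in Section~\ref{section:qCheb}. (Here one must be careful that the frieze relation $f(i,j)f(i,j+1)-\qp f(i-1,j+1)f(i+1,j)=1$ on $\Gamma$ translates, after applying the automorphism $\phi$ to move $(1,j)$ back into the chosen fundamental domain, into exactly the exchange relation $X_jX'_j=\qp X_{j-1}X_{j+1}+1$ for the seed $\Sigma$; this is essentially the computation already carried out in the proof of Theorem~\ref{thm:bij}.) This matches $P_1(X'_j)=X'_j$.

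Next I would set up the inductive step. Assume $f(i,j)=P_i(X'_j,\ldots,X'_{j+i-1})$ and $f(i-1,j+1)=P_{i-1}(X'_{j+1},\ldots,X'_{j+i-1})$ have been established for the appropriate range of $j$; I want $f(i+1,j)=P_{i+1}(X'_j,\ldots,X'_{j+i})$. The frieze relation, read along a suitable diagonal of $\Gamma$, gives
\[
f(i+1,j)=f(i,j)f(i,j+1)-1=\ \text{(something)}
\]
but the cleaner recursion to exploit is the one that expresses $f(i+1,j)$ in terms of $f(i,j)$, $f(i-1,j)$ (or $f(i-1,j+1)$) and a single new variable $X'_{j+i}$. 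Concretely, I expect that iterating the quantum unimodular rule along a row yields a three-term relation of the shape $f(i+1,j)=\qp^{\pm 1}\big(f(i,j)\,X'_{j+i}-\qp^{\pm 1}f(i-1,j)\big)$, where the signs in the exponents alternate with the parity of $i$ exactly as in \eqref{eq:Pndefn}. Matching this with the defining recursion \eqref{eq:Pndefn} for $P_{i+1,j}$, and using the two induction hypotheses, closes the induction.

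The main obstacle, and the part deserving the most care, is pinning down the precise powers of $\qp$ in that three-term relation and verifying that their parity pattern agrees with the two cases of \eqref{eq:Pndefn}. This amounts to a bookkeeping of the form $\Lambda$ evaluated on the relevant lattice vectors $-e_k$, $-e_k+e_{k-1}+e_{k+1}$, etc., entirely analogous to the exponent computations in the proof of Theorem~\ref{thm:bij} ($\Lambda^i_{i-1,i+1}=0$, $\Lambda^i_{i,i+1}=\Lambda_{i,i+1}$, $\Lambda^i_{i,i-1}=-\Lambda_{i,i-1}$). The alternation between even and odd $i$ in \eqref{eq:Pndefn} should fall out of the fact that $\Lambda_{k,k+1}$ depends on the parity of $k$. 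Once the parity of the exponents is checked in, say, the first two or three steps of the induction, the general step is a formal manipulation. As a sanity check I would verify the formula against the explicit $n=2$ quantum frieze of variables displayed in the excerpt: there $f(1,1)=X'_1=X_1^{-1}(1+\qp X_2)$, and $f(2,1)$ should equal $P_2(X'_1,X'_2)$, which one reads off directly from \eqref{eq:Pndefn} with $m=1$ odd, namely $\qm(X'_1X'_2-1)$, and this should reproduce $X_2^{-1}X_1^{-1}(X_1+\qp+qX_2)$ after expansion.
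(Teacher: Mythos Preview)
Your induction on $i$ is a reasonable organising principle, but the inductive step is where the proof lives and you have hand-waved it. The quantum unimodular rule at $(i,j)$ reads
\[
f(i,j)f(i,j+1)-\qp f(i-1,j+1)f(i+1,j)=1,
\]
so using the induction hypothesis on the three known entries you get
\[
P_{i,j}\,P_{i,j+1}-\qp\,P_{i-1,j+1}\,f(i+1,j)=1.
\]
To conclude $f(i+1,j)=P_{i+1,j}$ you therefore need exactly
\[
P_{i,j}\,P_{i,j+1}-\qp\,P_{i-1,j+1}\,P_{i+1,j}=1,
\]
which is Proposition~\ref{prop:frisepn}. That identity is the real content of the theorem, and it does \emph{not} fall out of ``$\Lambda$-bookkeeping on lattice vectors $-e_k,\,-e_k+e_{k-1}+e_{k+1}$'' as in the proof of Theorem~\ref{thm:bij}. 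Those computations live in the quantum torus generated by the $X_k$; here you must compute with the $X'_k$, whose commutation is more delicate: for instance $X'_iX'_{i+1}-1=q(X'_{i+1}X'_i-1)$, which is \emph{not} a $q$-commutation relation, while $X'_iX'_{i+k}=q^{(-1)^{k-1}}X'_{i+k}X'_i$ for $k>1$. The paper isolates precisely these facts (and the resulting commutation of $P_{m,i}$ with $X'_{i+m-1+k}$) as separate lemmas, and they are what drives the induction in Proposition~\ref{prop:frisepn}. Your sketch contains no analogue of this; the ``parity pattern of exponents'' you hope to check is a symptom, not the mechanism.

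A second, smaller gap: your base case argument that ``$f(1,j)$ is the variable obtained from $X_j$ by a single mutation in direction $j$'' is not what the frieze relation at $(1,j-1)$ says---that relation also involves $f(2,j-1)$. The identification $f(1,j)=X^{-e_j+e_{j-1}}+X^{-e_j+e_{j+1}}=X'_j$ is true, but it comes from Theorem~\ref{thm:bij} (bijection with classical cluster variables via specialisation) together with the classical fact that $\underline f(1,j)=x'_j$, not from a single frieze relation.

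In short, the paper proceeds in the opposite direction from you: it shows the $P_{m,i}$ (defined by the three-term recursion~\eqref{eq:Pndefn}) satisfy the quantum frieze relation, and then invokes Lemma~\ref{lem:mouth} and the base row $f(1,j)=X'_j$ to conclude. Your route---showing $f$ satisfies the three-term recursion---is logically equivalent, but you have not supplied the key non-commutative computation that makes either direction go through.
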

	The rest of the section is devoted to the proof of the theorem.

	We start with some technical lemmas~:
	\begin{lem}
		For any $1 \leq i < n$, we have~:
		$$ X'_i X'_{i+1} - 1 = q(X'_{i+1}X'_i - 1) $$
		and
		$$ X'_i X'_{i+k} = q^{(-1)^{k-1}} X'_{i+k}X'_i $$
		for any $1< k \leq n-i$. 
	\end{lem}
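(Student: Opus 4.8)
The plan is to expand everything in the based quantum torus $\mathcal{T}(\Lambda)$ and compare monomials. Recall from the exchange relation of \cite{BZ:quantum} that, since the $j$-th column of $B$ equals $e_{j-1}-e_{j+1}$, the quantum cluster variable obtained by a single mutation of the initial cluster $\X$ in direction $j$ is
$$X'_j = X^{e_{j-1}-e_j}+X^{e_{j+1}-e_j},$$
with the convention $e_0 = e_{n+1} = 0$. I will use repeatedly that $X^uX^v = q^{\Lambda(u,v)/2}X^{u+v}$ and, since $\Lambda$ is skew-symmetric, that $X^uX^v = q^{\Lambda(u,v)}X^vX^u$.

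For the commutation relation, fix $k$ with $1<k\le n-i$ and write $X'_i = X^{a_1}+X^{a_2}$, $X'_{i+k} = X^{c_1}+X^{c_2}$, where each $a_p$ has the form $e_\alpha-e_i$ with $\alpha\in\{i-1,i+1\}$ and each $c_r$ has the form $e_\gamma-e_{i+k}$ with $\gamma\in\{i+k-1,i+k+1\}$. Distributing, $X'_iX'_{i+k}=\sum_{p,r}q^{\Lambda(a_p,c_r)/2}X^{a_p+c_r}$ and $X'_{i+k}X'_i=\sum_{p,r}q^{-\Lambda(a_p,c_r)/2}X^{a_p+c_r}$, so it is enough to prove that $\Lambda(a_p,c_r)=(-1)^{k-1}$ for all $p,r$, after which $X'_iX'_{i+k}=q^{(-1)^{k-1}}X'_{i+k}X'_i$ is immediate. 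Now $\Lambda(a_p,c_r)=\Lambda_{\alpha,\gamma}-\Lambda_{\alpha,i+k}-\Lambda_{i,\gamma}+\Lambda_{i,i+k}$; because $k\ge 2$, every index arising from $a_p$ is at most every index arising from $c_r$, so by the explicit description of $\Lambda$ each of these four entries is $0$ or $1$, with a value depending only on the parities of $i$ and $k$ (note $\alpha\equiv i+1$ and $\gamma\equiv i+k+1\pmod 2$). A short case analysis over these two parities collapses the alternating sum to $1$ if $k$ is odd and to $-1$ if $k$ is even, that is, to $(-1)^{k-1}$.

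For the first relation I would put $X'_i=X^{a_1}+X^{a_2}$, $X'_{i+1}=X^{b_1}+X^{b_2}$ with $a_1=e_{i-1}-e_i$, $a_2=e_{i+1}-e_i$, $b_1=e_i-e_{i+1}$, $b_2=e_{i+2}-e_{i+1}$. The key observation is that $a_2+b_1=0$ whereas $a_p+b_r\ne 0$ for the three remaining pairs, and that $\Lambda(a_2,b_1)=\Lambda(e_{i+1}-e_i,\,e_i-e_{i+1})=0$ by skew-symmetry; hence the monomial $X^0=1$ appears with coefficient exactly $1$ in each of $X'_iX'_{i+1}$ and $X'_{i+1}X'_i$, contributed solely by the $(2,1)$-summand. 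Therefore
\begin{align*}
X'_iX'_{i+1}-1 &= \sum_{(p,r)\ne(2,1)}q^{\Lambda(a_p,b_r)/2}X^{a_p+b_r},\\
X'_{i+1}X'_i-1 &= \sum_{(p,r)\ne(2,1)}q^{-\Lambda(a_p,b_r)/2}X^{a_p+b_r},
\end{align*}
and the identity $X'_iX'_{i+1}-1=q(X'_{i+1}X'_i-1)$ reduces to $\Lambda(a_1,b_1)=\Lambda(a_1,b_2)=\Lambda(a_2,b_2)=1$. Each of these is again a four-term alternating sum of entries $\pm\Lambda_{s,t}$ that the explicit formula for $\Lambda$ evaluates to $1$, for either parity of $i$.

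Finally I would dispatch the boundary indices. When $i+k=n$ (respectively $i+1=n$) one reads $c_2=-e_n$ (respectively $b_2=-e_n$), which is the generic vector $e_{n+1}-e_n$ under the convention $e_{n+1}=0$; since $n$ is even, $n+1$ is odd, so $\Lambda_{s,n+1}=0$ for every $s$, and the four-term computations above are unaffected -- this is the single point where the hypothesis that $n$ is even is used. When $i=1$ one reads $a_1=-e_1$, and a direct computation (two terms in place of four) confirms that $\Lambda(a_1,c_r)=(-1)^{k-1}$, respectively $\Lambda(a_1,b_1)=\Lambda(a_1,b_2)=1$, still hold. With all these equalities in place the lemma follows. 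I expect the only mildly tedious ingredient to be the parity bookkeeping for the values of $\Lambda$ on the relevant pairs of vectors, which is entirely elementary; no conceptual difficulty is anticipated.
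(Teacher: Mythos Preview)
Your proof is correct and follows essentially the same approach as the paper: expand $X'_i$ and $X'_{i+k}$ as sums of monomials $X^u$ in the based quantum torus and compare term by term using the explicit entries of $\Lambda$. Your presentation is in fact somewhat cleaner than the paper's---you reduce the second identity to the single claim that $\Lambda(a_p,c_r)=(-1)^{k-1}$ for all four pairs, and you treat the boundary indices $i=1$ and $i+k=n$ explicitly, whereas the paper leaves these implicit---but the underlying argument is the same direct computation.
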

	\begin{proof}
		For any $1 \leq i \leq n$, we have 
		\begin{align*}
			X'_i X'_{i+1} - 1 &= (X^{-e_i + e_{i-1}} + X^{-e_i + e_{i+1}})(X^{-e_{i+1}+e_i} + X^{-e_{i+1} + e_{i+2}}) - 1 \\
					&= qX^{-e_{i+1}+e_i}X^{-e_i+e_{i-1}} + qX^{-e_{i+1}+e_{i+2}}X^{-e_i+e_{i-1}}\\
					&+ 1 + qX^{-e_{i+1}+e_{i+2}}X^{-e_i+e_{i+1}} - 1 \\
					&= q\left( (X^{-e_{i+1}+e_i} + X^{-e_{i+1}+e_{i+2}})(X^{-e_i+e_{i-1}} + X^{-e_i + e_{i+1}}) \right)\\ 
					&- qX^{-e_{i+1}+e_i}X^{-e_i + e_{i+1}} \\
					&= q(X'_{i+1}X'_i - 1).
		\end{align*}
		For any $1 \leq i < n$ and any $1 < k \leq n-i$, we have 
		\begin{align*}
			X'_i X'_{i+k} &= (X^{-e_i + e_{i-1}} + X^{-e_i + e_{i+1}})(X^{-e_{i+k}+e_{i+k-1}} + X^{-e_{i+k} + e_{i+k+1}}) \\
				&= q^{-1^{k-1}}X^{-e_{i+k}+e_{i+k-1}}X^{-e_i+e_{i-1}} + qX^{-e_{i+k}+e_{i+k+1}}X^{-e_i+e_{i-1}}\\
				&	+ qX^{-e_{i+k}+e_{i+k-1}}X^{-e_i+e_{i+1}} + qX^{-e_{i+k}+e_{i+k+1}}X^{-e_i+e_{i+1}} \\
				&= q\left( (X^{-e_{i+k}+e_{i+k-1}} + X^{-e_{i+k}+e_{i+k+1}})(X^{-e_i+e_{i-1}} + X^{-e_i + e_{i+1}}) \right)\\ 
				&= q(X'_{i+k}X'_i).
		\end{align*}
	\end{proof}

	\begin{lem}\label{lem:pnxn}
		The following hold for $k>1$ and $1\leq i \leq n-(m-1+k)$~:
		$$P_{m,i}X'_{i+m-1+k} = q^{(-1)^{k-1}}X'_{i+m-1+k}P_{m,i} \quad \textrm{if $m$ is odd~;}$$
		$$P_{m,i}X'_{i+m-1+k} = X'_{i+m-1+k}P_{m,i} \quad \textrm{if $m$ is even.}$$
	\end{lem}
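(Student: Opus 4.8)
The plan is to prove both identities by induction on $m$, using the explicit recursion \eqref{eq:Pndefn} together with the commutation relations between the $X'_j$ established in the previous lemma. The base cases $m=0$ and $m=1$ are immediate: $P_{0,i}=1$ commutes with everything, and $P_{1,i}=X'_i$, so the claim reduces exactly to the relation $X'_iX'_{i+k}=q^{(-1)^{k-1}}X'_{i+k}X'_i$ for $k>1$, which is the second identity of the previous lemma.

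For the inductive step, I would fix $m\geq 1$, assume the statement holds for $P_{m,i}$ and $P_{m-1,i}$ (for all admissible $i$), and compute $P_{m+1,i}X'_{i+m+k}$ by expanding $P_{m+1,i}$ via \eqref{eq:Pndefn}. Concretely, writing $P_{m+1,i}=P_{m,i}X'_{i+m}-q^{-1/2}P_{m-1,i}$ when $m$ is even (and the analogous scalar multiple when $m$ is odd; the overall scalar $q^{-1/2}$ is central and does not affect commutation), I would move $X'_{i+m+k}$ to the left past $P_{m,i}X'_{i+m}$ and past $P_{m-1,i}$ separately. The key point is a bookkeeping of indices: $X'_{i+m+k}$ sits at distance $k+1$ from $X'_{i+m}$ (so it commutes with $X'_{i+m}$ up to $q^{(-1)^{k}}$ by the previous lemma, since $k+1>1$), at distance $k$ from the "right end" $X'_{i+m-1}$ of $P_{m,i}$ (giving a factor $q^{(-1)^{k-1}}$ or $1$ according to the parity of $m$, by the inductive hypothesis), and at distance $k+1$ from the right end of $P_{m-1,i}$ (giving a factor matching the parity of $m-1$, i.e. the opposite parity). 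One then checks that the accumulated powers of $q$ from the two terms agree, so they can be collected, and that the resulting total power of $q$ is precisely $q^{(-1)^{k-1}}$ when $m+1$ is odd (i.e. $m$ even) and $q^0=1$ when $m+1$ is even (i.e. $m$ odd).

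I would organize the computation as two parallel cases according to the parity of $m$. In the case $m$ even, $P_{m,i}$ is even so by induction $P_{m,i}X'_{i+m+k}=X'_{i+m+k}P_{m,i}$ (distance $k+1>1$, even case), while $P_{m-1,i}$ is odd so $P_{m-1,i}X'_{i+m+k}=q^{(-1)^{k}}X'_{i+m+k}P_{m-1,i}$ (distance $k+2>1$); combining with $X'_{i+m}X'_{i+m+k}=q^{(-1)^{k-1}}X'_{i+m+k}X'_{i+m}$ shows both terms of $P_{m+1,i}$ pick up the factor $q^{(-1)^{k-1}}$, as required since $m+1$ is odd. In the case $m$ odd, $P_{m,i}$ is odd so by induction $P_{m,i}X'_{i+m+k}=q^{(-1)^{k}}X'_{i+m+k}P_{m,i}$ (distance $k+1$), this combines with the factor $q^{(-1)^{k-1}}$ from commuting past $X'_{i+m}$ to give $q^{(-1)^{k}+(-1)^{k-1}}=q^0=1$; and $P_{m-1,i}$ is even so $P_{m-1,i}X'_{i+m+k}=X'_{i+m+k}P_{m-1,i}$; hence both terms commute, as required since $m+1$ is even.

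The main obstacle I anticipate is purely notational: one must be scrupulous about which "distance" governs each commutation and careful that the boundary cases (where $i+m+k$ could equal $n$, or where the ranges of indices in $P_{m,i}$ and $P_{m-1,i}$ differ by one) still satisfy the hypothesis $k>1$ of the previous lemma and the hypothesis of the inductive statement. There is no conceptual difficulty — once the index bookkeeping is set up correctly, every step is a single application of the previous lemma or the inductive hypothesis, and the powers of $q$ telescope to exactly the claimed exponent. It is worth isolating, perhaps as an inline remark, that the scalar prefactor $q^{-1/2}$ appearing when $m$ is odd is central in $\mathcal{F}(\Lambda)$ and so plays no role in any of these commutations.
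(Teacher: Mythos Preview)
Your approach is essentially the paper's: induct on $m$ using the recursion \eqref{eq:Pndefn} and the commutation relations from the previous lemma. The paper reduces to $i=1$, takes $m=1,2$ as base cases, and steps from $(m,m+1)$ to $m+2$; you keep general $i$, take $m=0,1$, and step from $(m-1,m)$ to $m+1$ --- these are cosmetic differences.

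There are, however, two bookkeeping slips of exactly the kind you anticipated. First, in your overview paragraph you say $X'_{i+m+k}$ sits at distance $k+1$ from $X'_{i+m}$; the index difference is $k$, and the commutation factor is $q^{(-1)^{k-1}}$, not $q^{(-1)^{k}}$. (You in fact use the correct factor $q^{(-1)^{k-1}}$ later in the detailed case analysis, so this is an inconsistency in the write-up rather than in the argument.) Second, in the case $m$ even you write $P_{m-1,i}X'_{i+m+k}=q^{(-1)^{k}}X'_{i+m+k}P_{m-1,i}$; but the inductive hypothesis applied to $P_{m-1,i}$ with shift $k''=k+2$ gives exponent $(-1)^{k''-1}=(-1)^{k+1}=(-1)^{k-1}$, not $(-1)^{k}$. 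This is precisely what is needed for the two terms to carry the \emph{same} factor $q^{(-1)^{k-1}}$, so your stated conclusion that ``both terms of $P_{m+1,i}$ pick up the factor $q^{(-1)^{k-1}}$'' is correct, but the line leading to it is not. With these two corrections your computation goes through and matches the paper's.
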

	\begin{proof}
		It is enough to prove the statement for the particular case where $i=1$ and therefore, in order to simplify notations, we set $P_m = P_{m,1}$.
		We have
		$$P_1X'_{k+1} = X'_1 X'_{k+1} = q^{(-1)^{k-1}}X'_{k+1}X'_1 = q^{(-1)^{k-1}}X'_{k+1}P_1$$
		and
		\begin{align*}
		P_2X'_{k+1} &= q^{-\frac{1}{2}}(X'_1X'_2-1)X'_{k+1}\\
			&= q^{-\frac{1}{2}}(q^{(-1)^{k-1}+(-1)^{k-2}}X'_{k+1}X'_1X'_2 - X'_{k+1})\\
			&= X'_{k+1}P_2.
		\end{align*}
	
		We assume that the statement is true for $P_m$ and $P_{m+1}$. Then, if $m+2$ is odd, we have
		\begin{align*}
		P_{m+2}X'_{m+2+k} &= (P_{m+1}X'_{m+2} - q^{-\frac{1}{2}}P_m)X'_{m+2+k} \\
				&= P_{m+1}X'_{m+2}X'_{m+2+k} - q^{-\frac{1}{2}}P_m X'_{m+2+k} \\
				&= q^{(-1)^{k-1}} P_{m+1}X'_{m+2+k}X'_{m+2} - q^{-\frac{1}{2}}q^{(-1)^{k+1}}X'_{m+2+k}P_m \\
				&= q^{(-1)^{k-1}} X'_{m+2+k}P_{m+1}X'_{m+2} - q^{-\frac{1}{2}}q^{(-1)^{k-1}}X'_{m+2+k}P_m \\
				&=q^{(-1)^{k-1}}X'_{m+2+k}P_{m+2}.
		\end{align*}
		Similarly, if $m+2$ is even, then
		\begin{align*}
			P_{m+2}X'_{m+2+k}=X'_{m+2+k}P_{m+2},
		\end{align*}
		which proves the induction step.
	\end{proof}

	\begin{lem}
		For every $m\geq 1$ and $1 \leq i \leq n-m$, the following holds~:
		\begin{equation*}
			P_{m+1,i}=\begin{cases}
				X'_{m+1} P_{m,i} - \qp P_{m-1,i}& \textrm{if } m \textrm{ is even}~;\\
			\qp(X'_{m+1} P_{m,i}- P_{m-1,i}) & \textrm{if } m \textrm{ is odd.}
			\end{cases}
		\end{equation*}
	\end{lem}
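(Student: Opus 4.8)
The plan is to reduce immediately to the case $i=1$. As in the proof of Lemma~\ref{lem:pnxn}, every relation among the $X'_j$ used below is invariant under a simultaneous shift of all indices, so the general statement will follow from the case $i=1$ by translating every index by $i-1$. Writing $P_m := P_{m,1}$, the goal becomes to prove, for $1 \le m \le n-1$,
\begin{equation*}
	P_{m+1} = \begin{cases} X'_{m+1} P_m - \qp P_{m-1} & \textrm{if } m \textrm{ is even;} \\ \qp\left(X'_{m+1} P_m - P_{m-1}\right) & \textrm{if } m \textrm{ is odd,} \end{cases}
\end{equation*}
that is, the ``left-handed'' counterpart of the defining recursion \eqref{eq:Pndefn}, and I would do this by induction on $m$.

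For the base case $m=1$, \eqref{eq:Pndefn} gives $P_2 = \qm(X'_1 X'_2 - 1)$ whereas the asserted formula reads $P_2 = \qp(X'_2 X'_1 - 1)$; these coincide exactly because of the first of the two technical lemmas above, namely $X'_1 X'_2 - 1 = q(X'_2 X'_1 - 1)$. For the inductive step I would assume the formula for all indices up to $m+1$ (so $m \le n-2$) and deduce it for $m+2$: start from \eqref{eq:Pndefn} written for $P_{m+2}$, substitute the inductively known left-handed expression for $P_{m+1}$, and then push the factor $X'_{m+2}$ all the way to the left. This uses precisely two ingredients. First, Lemma~\ref{lem:pnxn} lets $X'_{m+2}$ commute past $P_m$ (index gap $k=2$) and past $P_{m-1}$ (index gap $k=3$), each time at the cost of an explicit power $q^{\pm 1}$ or of nothing, according to the parities of $m$ and $m-1$. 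Second, the adjacent pair $X'_{m+1}X'_{m+2}$ is handled by the first technical lemma, which gives $X'_{m+1}X'_{m+2} = qX'_{m+2}X'_{m+1} - q + 1$. After collecting terms, the part carrying a left factor $X'_{m+2}$ reassembles, using the inductive expression for $P_{m+1}$ once more, into $X'_{m+2}P_{m+1}$, while the remainder is a scalar multiple of $P_m$. Concretely I expect to obtain $P_{m+1}X'_{m+2} - X'_{m+2}P_{m+1} = (\qm - \qp)P_m$ when $m+1$ is even and $P_{m+1}X'_{m+2} - qX'_{m+2}P_{m+1} = (1-q)P_m$ when $m+1$ is odd; feeding either identity back into \eqref{eq:Pndefn} converts the right-handed recursion into the claimed left-handed one, which closes the induction.

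The hard part is not conceptual but is the bookkeeping: the pair $X'_{m+1}X'_{m+2}$ sits outside the range $k>1$ covered by Lemma~\ref{lem:pnxn}, so it must be commuted ``by hand'' via the first technical lemma, and it is exactly the defect $-q+1$ appearing there that produces the discrepancy between the coefficients in the left-handed and right-handed versions of \eqref{eq:Pndefn}. The only other place where care is needed is keeping track of the various $q^{\pm 1/2}$ factors across the two parity cases; once those are organized, each case is a routine rearrangement.
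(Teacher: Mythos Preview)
Your proposal is correct and follows essentially the same route as the paper: reduce to $i=1$, expand one level of the recursion, use Lemma~\ref{lem:pnxn} to commute $X'_{m+2}$ past $P_m$ and $P_{m-1}$ (gaps $k=2,3$), and handle the adjacent pair $X'_{m+1}X'_{m+2}$ via the identity $X'_iX'_{i+1}-1=q(X'_{i+1}X'_i-1)$. The one organizational difference is that the paper does not set this up as an induction on the statement being proved: instead of substituting the inductively known left-handed form of $P_{m+1}$, it simply expands using the right-handed \emph{definition} (one level down) and reassembles, so the induction hypothesis is never actually used---your commutator identities $P_{m+1}X'_{m+2}-X'_{m+2}P_{m+1}=(\qm-\qp)P_m$ (for $m+1$ even) and $P_{m+1}X'_{m+2}-qX'_{m+2}P_{m+1}=(1-q)P_m$ (for $m+1$ odd) hold directly from the definition plus Lemma~\ref{lem:pnxn}, which makes the argument slightly cleaner.
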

	\begin{proof}
		Again, we use the notation $P_m(X'_1,\cdots,X'_m)=P_m$ since it is enough to prove the result for $i=1$. If $m$ is even, we have
		\begin{align*}
		P_mX'_{m+1} - q^{-\frac{1}{2}}P_{m-1} &= q^{-\frac{1}{2}}(P_{m-1}X'_m - P_{m-2})X'_{m+1} - q^{-\frac{1}{2}}P_{m-1} \\
						&= q^{-\frac{1}{2}}(P_{m-1}X'_m X'_{m+1} - P_{m-2}X'_{m+1} - P_{m-1}) \\
						&= q^{-\frac{1}{2}}(P_{m-1}(X'_m X'_{m+1}-1) - X'_{m+1}P_{m-2}) \\
						&= q^{-\frac{1}{2}}(qP_{m-1}(X'_{m+1} X'_m -1) - X'_{m+1}P_{m-2}) \\
						&= q^{-\frac{1}{2}}(qP_{m-1}X'_{m+1} X'_m -qP_{m-1} - X'_{m+1}P_{m-2}). \\
		\end{align*}
		Using Lemma \ref{lem:pnxn}, we thus get
		\begin{align*}
		P_mX'_{m+1} - q^{-\frac{1}{2}}P_{m-1} 
						&= q^{-\frac{1}{2}}(X'_{m+1}P_{m-1}X'_m -qP_{m-1} - X'_{m+1}P_{m-2}) \\
						&= X'_{m+1}q^{-\frac{1}{2}}(P_{m-1}X'_m - P_{m-2}) - \qp P_{m-1} \\
						&= X'_{m+1}P_m - \qp P_{m-1}.
		\end{align*}
		Similarly, if $m$ is odd,
		\begin{align*}
			q^{-\frac{1}{2}}(P_mX'_{m+1} - P_{m-1}) = \qp(X'_{m+1}P_m -P_{m-1}).
		\end{align*}
	\end{proof}

	We can now prove the key relation for these polynomials~:
	\begin{prop}\label{prop:frisepn}
		The $P_m$ satisfy the quantum frieze relation~:
		$$P_{m,i}P_{m,i+1} = \qp P_{m+1,i}P_{m-1,i+1}+1$$
		for any $1 \leq i \leq n-m$.
	\end{prop}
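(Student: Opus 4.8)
The plan is to prove this by induction on $m$, following the classical Euler identity for signed continuants but with careful bookkeeping of the powers of $q$ and of the parities in \eqref{eq:Pndefn}. Set $D_{m,i} := P_{m,i}P_{m,i+1} - \qp P_{m+1,i}P_{m-1,i+1}$; the goal is to show $D_{m,i}=1$ for all $m\ge 1$ and all $1\le i\le n-m$. The base case $m=1$ is immediate: $P_{0,i}=1$, $P_{1,i}=X'_i$, and since $1$ is odd, \eqref{eq:Pndefn} gives $P_{2,i}=\qm(X'_iX'_{i+1}-1)$, so
\[
D_{1,i} = X'_iX'_{i+1} - \qp\,\qm\,(X'_iX'_{i+1}-1) = 1 .
\]

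For the inductive step, fix $m\ge 2$ and $1\le i\le n-m$; I describe the case $m$ even, the case $m$ odd having the same shape with the half-integer powers of $q$ rearranged and leading to the same conclusion. Using \eqref{eq:Pndefn} to replace $P_{m+1,i}$ by $P_{m,i}X'_{i+m}-\qm P_{m-1,i}$ gives
\[
\qp P_{m+1,i}P_{m-1,i+1} = \qp\,P_{m,i}\,X'_{i+m}\,P_{m-1,i+1} - P_{m-1,i}P_{m-1,i+1} .
\]
The key move is to rewrite the sandwiched factor $X'_{i+m}P_{m-1,i+1}$ using the \emph{left-handed} form of the recursion established in the lemma immediately preceding this proposition, applied with $(m,i)$ replaced by $(m-1,i+1)$ (note $m-1$ is odd): from $P_{m,i+1}=\qp\bigl(X'_{i+m}P_{m-1,i+1}-P_{m-2,i+1}\bigr)$ one gets $X'_{i+m}P_{m-1,i+1}=\qm P_{m,i+1}+P_{m-2,i+1}$. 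Substituting this back and using that powers of $q$ are central,
\[
\qp P_{m+1,i}P_{m-1,i+1} = P_{m,i}P_{m,i+1} + \qp P_{m,i}P_{m-2,i+1} - P_{m-1,i}P_{m-1,i+1} ,
\]
which rearranges to $D_{m,i} = P_{m-1,i}P_{m-1,i+1} - \qp P_{m,i}P_{m-2,i+1} = D_{m-1,i}$. Together with the base case, this yields $D_{m,i}=1$ for all admissible pairs, which is the claim.

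I do not expect a serious obstacle, as the argument is essentially organized bookkeeping; the points that require attention are: using the right-handed recursion \eqref{eq:Pndefn} for the factor $P_{m+1,i}$ but the left-handed recursion for $P_{m,i+1}$, so that $X'_{i+m}$ sits between $P_{m,i}$ and $P_{m-1,i+1}$ and can be absorbed with no further commutation; checking that the powers of $\qpm$ really do cancel in both parities of $m$ (the two sub-computations have identical shape but different intermediate exponents); and checking that every index stays within the ranges required by \eqref{eq:Pndefn} and the preceding lemmas, which follows from $1\le i\le n-m$. Note that Lemma \ref{lem:pnxn} is not needed directly here; it enters only through the left-handed recursion.
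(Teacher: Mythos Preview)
Your argument is correct and follows essentially the same route as the paper: induction on $m$, using both the right-handed recursion \eqref{eq:Pndefn} and the left-handed recursion from the preceding lemma so that the single intermediate variable $X'_{i+m}$ is absorbed without any extra commutation. Your telescoping formulation $D_{m,i}=D_{m-1,i}$ is just a repackaging of the paper's step from $m$ to $m+1$ (the paper expands the second factor with the left-handed recursion and then pushes the variable ``up'' into $P_{m+2,i}$, whereas you expand the first factor with the right-handed recursion and push the variable ``down'' into $P_{m,i+1}$), and the odd case that you leave implicit goes through with the same cancellation of half-integer powers, exactly as in the paper.
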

	\begin{proof}
		Without loss of generality, we assume that $i=1$. Assume that the relation holds for a given $P_m$. If $m$ is odd, then 
		\begin{align*}
		&P_{m+1}(X'_1,\cdots,X'_{m+1})P_{m+1}(X'_2,\cdots,X'_{m+2}) \\
		&= P_{m+1}(X'_1,\cdots,X'_{m+1})\qp(X'_{m+2}P_{m}(X'_2,\cdots,X'_{m+1})-P_{m-1}(X'_2,\cdots,X'_{m})) \\
		&= \qp P_{m+1}(X'_1,\cdots,X'_{m+1})X'_{m+2}P_{m}(X'_2,\cdots,X'_{m+1})\\
		&\quad-\qp P_{m+1}(X'_1,\cdots,X'_{m+1})P_{m-1}(X'_2,\cdots,X'_{m}) \\
		&= \qp(P_{m+2}(X'_1,\cdots,X'_{m+2})+q^{-\frac{1}{2}}P_{m}(X'_1,\cdots,X'_m))P_{m}(X'_2,\cdots,X'_{m+1})\\
		&\quad-\qp P_{m+1}(X'_1,\cdots,X'_{m+1})P_{m-1}(X'_2,\cdots,X'_{m}) \\
		&=\qp P_{m+2}(X'_1,\cdots,X'_{m+2})P_{m}(X'_2,\cdots,X'_{m+1})\\
		&\quad+ P_{m}(X'_1,\cdots,X'_m)P_{m}(X'_2,\cdots,X'_{m+1})\\
		&\quad-\qp P_{m+1}(X'_1,\cdots,X'_{m+1})P_{m-1}(X'_2,\cdots,X'_{m}) \\
		&=\qp P_{m+2}(X'_1,\cdots,X'_{m+2})P_{m}(X'_2,\cdots,X'_{m+1}) + 1. 
		\end{align*}
		If $m$ is even, we similarly obtain
		\begin{align*}
		P_{m+1}(X'_1,\cdots,&X'_{m+1})P_{m+1}(X'_2,\cdots,X'_{m+2})\\
		&\quad =\qp P_{m+2}(X'_1,\cdots,X'_{m+2})P_{m}(X'_2,\cdots,X'_{m+1}) + 1 .
		\end{align*}
	\end{proof}

	We can now finish the proof of Theorem \ref{thm:qCheb}.
	\begin{proof}[Proof of Theorem \ref{thm:qCheb}]
		Let $f$ be the quantum frieze of variables associated with $Q$. Mutating the initial seed in the direction $i$, it is easily seen that $$f(1,i) = X^{-e_i + e_{i+1}} + X^{-e_i + e_{i-1}} = X'_i = P_1(X'_i)$$ for any  $1\leq i\leq n$. Therefore, the theorem follows from Lemma \ref{lem:mouth}, Theorem \ref{thm:bij} and Proposition \ref{prop:frisepn}. 
	\end{proof}

	We thus obtain a very simple description of the quantum cluster variables in $\mathcal A_\Sigma$, which directly proves the lower bound phenomenon in this particular case~:
	\begin{corol}
		Let $\Sigma=(B,\Lambda,\X)$ be a quantum seed where $B$ is the incidence matrix of a linearly oriented quiver of Dynkin type $A$ (with an even number $n$ of vertices). Then
		$$\mathcal X_\Sigma = \X \sqcup \ens {P_{j}(X_i', \ldots, X_{i+j-1}')\ | \ 1 \leq i \leq n, \, j \leq n-i+1}.$$ \hfill \qed
	\end{corol}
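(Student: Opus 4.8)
The plan is to combine the three main results already established: Theorem \ref{thm:bij}, which says that $f$ induces a bijection from $\Gamma_0$ onto $\mathcal X_\Sigma$; Theorem \ref{thm:qCheb}, which identifies $f(i,j)$ explicitly as either $X_i$ (when $j=0$) or the quantum signed continuant polynomial $P_i(X_j',\ldots,X_{j+i-1}')$ (when $j>0$); and the bookkeeping given by our chosen fundamental domain $\Gamma_0 = \ens{(i,j) \ | \ j \geq 0,\, i+j \leq n+1}$. Since the corollary is essentially a repackaging of Theorem \ref{thm:qCheb}, the only real work is to translate the indexing of the fundamental domain into the indexing appearing in the corollary's statement.

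First I would split $\Gamma_0$ into the subset with $j=0$ and the subset with $j \geq 1$. The locus $\ens{(i,0) \ | \ i+0 \leq n+1}$ is exactly $\ens{(i,0) \ | \ 1 \leq i \leq n}$, and by Theorem \ref{thm:qCheb} the corresponding quantum cluster variables are $X_1, \ldots, X_n$, i.e.\ the entries of the initial cluster $\X$. For the locus with $j \geq 1$, a point $(i,j) \in \Gamma_0$ satisfies $i \geq 1$, $j \geq 1$ and $i + j \leq n+1$; the corresponding variable is $P_i(X_j', \ldots, X_{j+i-1}')$. Relabelling by setting the ``starting index'' to be $i' = j$ and the ``length'' to be $j' = i$, the constraints become $1 \leq i' $, $1 \leq j'$ and $i' + j' \leq n+1$, i.e.\ $1 \leq i' \leq n$ and $j' \leq n - i' + 1$, and the variable is $P_{j'}(X_{i'}', \ldots, X_{i'+j'-1}')$. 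This matches the set displayed in the corollary exactly. The union is disjoint because Theorem \ref{thm:bij} asserts $f$ is a \emph{bijection}, so distinct vertices of $\Gamma_0$ give distinct quantum cluster variables; in particular the $X_i$ are distinct from all the $P_j(\cdots)$ with $j \geq 1$, and this justifies writing $\sqcup$ rather than $\cup$.

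Concretely, I would write: by Theorem \ref{thm:bij}, $\mathcal X_\Sigma = f(\Gamma_0)$; by Theorem \ref{thm:qCheb}, $f(\Gamma_0) = \ens{X_i \ | \ 1 \leq i \leq n} \cup \ens{P_i(X_j', \ldots, X_{j+i-1}') \ | \ (i,j) \in \Gamma_0,\ j \geq 1}$; after the change of variables $(i,j) \mapsto (j' , i') = (i, j)$ described above, the second set becomes $\ens{P_{j'}(X_{i'}', \ldots, X_{i'+j'-1}') \ | \ 1 \leq i' \leq n,\ j' \leq n-i'+1}$, and the union is disjoint since $f$ is injective on $\Gamma_0$. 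Renaming the dummy variables $(i', j')$ back to $(i, j)$ yields the stated formula.

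The only point requiring any care — and the one I would treat as the ``main obstacle,'' though it is minor — is getting the index conventions straight: in Theorem \ref{thm:qCheb} the first subscript of $P$ records how many variables appear (the ``degree'') while the second records the row of $\Gamma$, whereas in the corollary the role of the two indices is swapped, and one must also check that the inequality $i+j \leq n+1$ defining the fundamental domain translates correctly to the condition $j \leq n-i+1$ (equivalently that $P_{j}(X_i',\ldots,X_{i+j-1}')$ only involves the genuinely available variables $X_1',\ldots,X_n'$, i.e.\ $i+j-1 \leq n$, which is the same inequality). Once this transcription is done, no further argument is needed.
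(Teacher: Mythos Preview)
Your proposal is correct and matches the paper's approach: the corollary is stated with a bare \qed, i.e.\ it is treated as an immediate consequence of Theorem~\ref{thm:bij} and Theorem~\ref{thm:qCheb} together with the description of the fundamental domain $\Gamma_0$, which is exactly the combination you spell out. Your careful reindexing and the observation that injectivity of $f$ yields the disjoint union are the only details to check, and you have handled them.
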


\section*{Acknowledgements}
	This article was written while the first author was funded by the ISM for an undergraduate research training supervised by the second author and while the second author was a CRM-ISM postdoctoral fellow at the Universit\'e de Sherbrooke under the supervision of Ibrahim Assem, Thomas Br\"ustle and Virginie Charette.

\newcommand{\etalchar}[1]{$^{#1}$}
\providecommand{\bysame}{\leavevmode\hbox to3em{\hrulefill}\thinspace}
\providecommand{\MR}{\relax\ifhmode\unskip\space\fi MR }
\providecommand{\MRhref}[2]{%
  \href{http://www.ams.org/mathscinet-getitem?mr=#1}{#2}
}
\providecommand{\href}[2]{#2}


\end{document}